\documentclass{article}

\usepackage{
    amsfonts,
    amsmath,
    amssymb,
    amsthm,
    accents, % needed for \ubar, must be after ams packages.
    enumitem,
    float,
    graphicx,
    mathrsfs,
    shuffle,
    stmaryrd,
    subcaption,
    tikz,
    ytableau
}

\usepackage[margin=2cm]{geometry}

\newtheorem{thm}{Theorem}[section]
\newtheorem{pro}[thm]{Proposition}
\newtheorem{crl}[thm]{Corollary}
\newtheorem{lem}[thm]{Lemma}

\theoremstyle{definition}

\renewcommand{\operatorname}{\mathsf}

\title{A note on the submonoids of the tied-symmetric monoid\\containing the symmetric group}

\author{
    Diego Arcis\footnote{Departamento de Matem\'aticas, Universidad de La Serena, Cisternas 1200 -- 1700000 La Serena, Chile (\texttt{diego.arcis@userena.cl}) - ORCID: \href{https://orcid.org/0000-0003-0027-9323}{0000-0003-0027-9323}.}
}

\date{}

\usepackage{hyperref}

\newcommand{\sym}{\mathfrak{S}}

\newcommand{\B}{\mathcal{B}}
\newcommand{\E}{\mathcal{E}}
\newcommand{\J}{\mathcal{J}}
\newcommand{\U}{\mathcal{U}}

\newcommand{\BR}{\mathcal{BR}}

\renewcommand{\P}{\mathcal{P}}

\newcommand{\CC}{\mathbb{C}}

\newcommand{\Par}{\operatorname{Par}}

\renewcommand{\max}{\operatorname{max}}

\newcommand{\aspdf}{1}
\newcommand{\vcdraw}[1]{\vcenter{\hbox{#1}}}

\usetikzlibrary{calc}
\usetikzlibrary{positioning}

\newcommand{\figfou}{
	\centering
	\ifnum\aspdf=1$\begin{array}{cccccccccccc}
		\vcdraw{\includegraphics[scale=0.7]{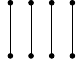}}&
		\vcdraw{\includegraphics[scale=0.7]{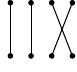}}&
		\vcdraw{\includegraphics[scale=0.7]{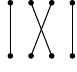}}&
		\vcdraw{\includegraphics[scale=0.7]{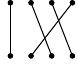}}&
		\vcdraw{\includegraphics[scale=0.7]{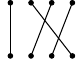}}&
		\vcdraw{\includegraphics[scale=0.7]{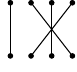}}&
		\vcdraw{\includegraphics[scale=0.7]{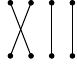}}&
		\vcdraw{\includegraphics[scale=0.7]{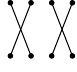}}&
		\vcdraw{\includegraphics[scale=0.7]{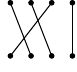}}&
		\vcdraw{\includegraphics[scale=0.7]{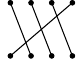}}&
		\vcdraw{\includegraphics[scale=0.7]{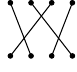}}&
		\vcdraw{\includegraphics[scale=0.7]{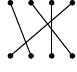}}\\
		\vcdraw{\includegraphics[scale=0.7]{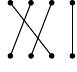}}&
		\vcdraw{\includegraphics[scale=0.7]{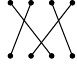}}&
		\vcdraw{\includegraphics[scale=0.7]{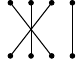}}&
		\vcdraw{\includegraphics[scale=0.7]{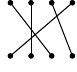}}&
		\vcdraw{\includegraphics[scale=0.7]{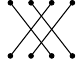}}&
		\vcdraw{\includegraphics[scale=0.7]{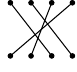}}&
		\vcdraw{\includegraphics[scale=0.7]{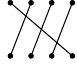}}&
		\vcdraw{\includegraphics[scale=0.7]{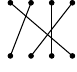}}&
		\vcdraw{\includegraphics[scale=0.7]{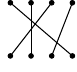}}&
		\vcdraw{\includegraphics[scale=0.7]{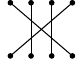}}&
		\vcdraw{\includegraphics[scale=0.7]{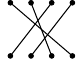}}&
		\vcdraw{\includegraphics[scale=0.7]{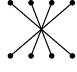}}\\
		\vcdraw{\includegraphics[scale=0.7]{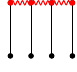}}&
		\vcdraw{\includegraphics[scale=0.7]{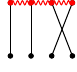}}&
		\vcdraw{\includegraphics[scale=0.7]{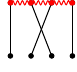}}&
		\vcdraw{\includegraphics[scale=0.7]{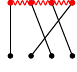}}&
		\vcdraw{\includegraphics[scale=0.7]{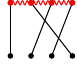}}&
		\vcdraw{\includegraphics[scale=0.7]{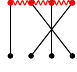}}&
		\vcdraw{\includegraphics[scale=0.7]{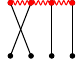}}&
		\vcdraw{\includegraphics[scale=0.7]{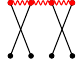}}&
		\vcdraw{\includegraphics[scale=0.7]{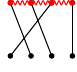}}&
		\vcdraw{\includegraphics[scale=0.7]{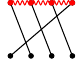}}&
		\vcdraw{\includegraphics[scale=0.7]{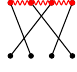}}&
		\vcdraw{\includegraphics[scale=0.7]{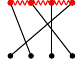}}\\
		\vcdraw{\includegraphics[scale=0.7]{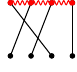}}&
		\vcdraw{\includegraphics[scale=0.7]{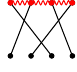}}&
		\vcdraw{\includegraphics[scale=0.7]{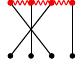}}&
		\vcdraw{\includegraphics[scale=0.7]{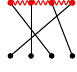}}&
		\vcdraw{\includegraphics[scale=0.7]{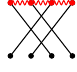}}&
		\vcdraw{\includegraphics[scale=0.7]{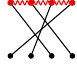}}&
		\vcdraw{\includegraphics[scale=0.7]{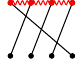}}&
		\vcdraw{\includegraphics[scale=0.7]{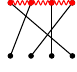}}&
		\vcdraw{\includegraphics[scale=0.7]{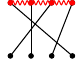}}&
		\vcdraw{\includegraphics[scale=0.7]{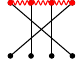}}&
		\vcdraw{\includegraphics[scale=0.7]{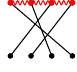}}&
		\vcdraw{\includegraphics[scale=0.7]{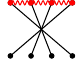}}
	\end{array}$\else
		\permutation[tkzpic=1,type=0]{1,2,3,4}
		\permutation[tkzpic=1,type=0]{1,2,4,3}
		\permutation[tkzpic=1,type=0]{1,3,2,4}
		\permutation[tkzpic=1,type=0]{1,3,4,2}
		\permutation[tkzpic=1,type=0]{1,4,2,3}
		\permutation[tkzpic=1,type=0]{1,4,3,2}
		\permutation[tkzpic=1,type=0]{2,1,3,4}
		\permutation[tkzpic=1,type=0]{2,1,4,3}
		\permutation[tkzpic=1,type=0]{2,3,1,4}
		\permutation[tkzpic=1,type=0]{2,3,4,1}
		\permutation[tkzpic=1,type=0]{2,4,1,3}
		\permutation[tkzpic=1,type=0]{2,4,3,1}
		\permutation[tkzpic=1,type=0]{3,1,2,4}
		\permutation[tkzpic=1,type=0]{3,1,4,2}
		\permutation[tkzpic=1,type=0]{3,2,1,4}
		\permutation[tkzpic=1,type=0]{3,2,4,1}
		\permutation[tkzpic=1,type=0]{3,4,1,2}
		\permutation[tkzpic=1,type=0]{3,4,2,1}
		\permutation[tkzpic=1,type=0]{4,1,2,3}
		\permutation[tkzpic=1,type=0]{4,1,3,2}
		\permutation[tkzpic=1,type=0]{4,2,1,3}
		\permutation[tkzpic=1,type=0]{4,2,3,1}
		\permutation[tkzpic=1,type=0]{4,3,1,2}
		\permutation[tkzpic=1,type=0]{4,3,2,1}
		\begin{tikzpicture}
			\permutation[tkzpic=0,type=0]{1,2,3,4}
			\tie[snake=true,snakelen=2.3,style=solid]{{1,1},{2,1},{3,1},{4,1}}
		\end{tikzpicture}\begin{tikzpicture}
			\permutation[tkzpic=0,type=0]{1,2,4,3}
			\tie[snake=true,snakelen=2.3,style=solid]{{1,1},{2,1},{3,1},{4,1}}
		\end{tikzpicture}\begin{tikzpicture}
			\permutation[tkzpic=0,type=0]{1,3,2,4}
			\tie[snake=true,snakelen=2.3,style=solid]{{1,1},{2,1},{3,1},{4,1}}
		\end{tikzpicture}\begin{tikzpicture}
			\permutation[tkzpic=0,type=0]{1,3,4,2}
			\tie[snake=true,snakelen=2.3,style=solid]{{1,1},{2,1},{3,1},{4,1}}
		\end{tikzpicture}\begin{tikzpicture}
			\permutation[tkzpic=0,type=0]{1,4,2,3}
			\tie[snake=true,snakelen=2.3,style=solid]{{1,1},{2,1},{3,1},{4,1}}
		\end{tikzpicture}\begin{tikzpicture}
			\permutation[tkzpic=0,type=0]{1,4,3,2}
			\tie[snake=true,snakelen=2.3,style=solid]{{1,1},{2,1},{3,1},{4,1}}
		\end{tikzpicture}\begin{tikzpicture}
			\permutation[tkzpic=0,type=0]{2,1,3,4}
			\tie[snake=true,snakelen=2.3,style=solid]{{1,1},{2,1},{3,1},{4,1}}
		\end{tikzpicture}\begin{tikzpicture}
			\permutation[tkzpic=0,type=0]{2,1,4,3}
			\tie[snake=true,snakelen=2.3,style=solid]{{1,1},{2,1},{3,1},{4,1}}
		\end{tikzpicture}\begin{tikzpicture}
			\permutation[tkzpic=0,type=0]{2,3,1,4}
			\tie[snake=true,snakelen=2.3,style=solid]{{1,1},{2,1},{3,1},{4,1}}
		\end{tikzpicture}\begin{tikzpicture}
			\permutation[tkzpic=0,type=0]{2,3,4,1}
			\tie[snake=true,snakelen=2.3,style=solid]{{1,1},{2,1},{3,1},{4,1}}
		\end{tikzpicture}\begin{tikzpicture}
			\permutation[tkzpic=0,type=0]{2,4,1,3}
			\tie[snake=true,snakelen=2.3,style=solid]{{1,1},{2,1},{3,1},{4,1}}
		\end{tikzpicture}\begin{tikzpicture}
			\permutation[tkzpic=0,type=0]{2,4,3,1}
			\tie[snake=true,snakelen=2.3,style=solid]{{1,1},{2,1},{3,1},{4,1}}
		\end{tikzpicture}\begin{tikzpicture}
			\permutation[tkzpic=0,type=0]{3,1,2,4}
			\tie[snake=true,snakelen=2.3,style=solid]{{1,1},{2,1},{3,1},{4,1}}
		\end{tikzpicture}\begin{tikzpicture}
			\permutation[tkzpic=0,type=0]{3,1,4,2}
			\tie[snake=true,snakelen=2.3,style=solid]{{1,1},{2,1},{3,1},{4,1}}
		\end{tikzpicture}\begin{tikzpicture}
			\permutation[tkzpic=0,type=0]{3,2,1,4}
			\tie[snake=true,snakelen=2.3,style=solid]{{1,1},{2,1},{3,1},{4,1}}
		\end{tikzpicture}\begin{tikzpicture}
			\permutation[tkzpic=0,type=0]{3,2,4,1}
			\tie[snake=true,snakelen=2.3,style=solid]{{1,1},{2,1},{3,1},{4,1}}
		\end{tikzpicture}\begin{tikzpicture}
			\permutation[tkzpic=0,type=0]{3,4,1,2}
			\tie[snake=true,snakelen=2.3,style=solid]{{1,1},{2,1},{3,1},{4,1}}
		\end{tikzpicture}\begin{tikzpicture}
			\permutation[tkzpic=0,type=0]{3,4,2,1}
			\tie[snake=true,snakelen=2.3,style=solid]{{1,1},{2,1},{3,1},{4,1}}
		\end{tikzpicture}\begin{tikzpicture}
			\permutation[tkzpic=0,type=0]{4,1,2,3}
			\tie[snake=true,snakelen=2.3,style=solid]{{1,1},{2,1},{3,1},{4,1}}
		\end{tikzpicture}\begin{tikzpicture}
			\permutation[tkzpic=0,type=0]{4,1,3,2}
			\tie[snake=true,snakelen=2.3,style=solid]{{1,1},{2,1},{3,1},{4,1}}
		\end{tikzpicture}\begin{tikzpicture}
			\permutation[tkzpic=0,type=0]{4,2,1,3}
			\tie[snake=true,snakelen=2.3,style=solid]{{1,1},{2,1},{3,1},{4,1}}
		\end{tikzpicture}\begin{tikzpicture}
			\permutation[tkzpic=0,type=0]{4,2,3,1}
			\tie[snake=true,snakelen=2.3,style=solid]{{1,1},{2,1},{3,1},{4,1}}
		\end{tikzpicture}\begin{tikzpicture}
			\permutation[tkzpic=0,type=0]{4,3,1,2}
			\tie[snake=true,snakelen=2.3,style=solid]{{1,1},{2,1},{3,1},{4,1}}
		\end{tikzpicture}\begin{tikzpicture}
			\permutation[tkzpic=0,type=0]{4,3,2,1}
			\tie[snake=true,snakelen=2.3,style=solid]{{1,1},{2,1},{3,1},{4,1}}
		\end{tikzpicture}
	\fi
}

\newcommand{\figthr}{
	\centering
	\ifnum\aspdf=1
		\includegraphics[scale=0.7]{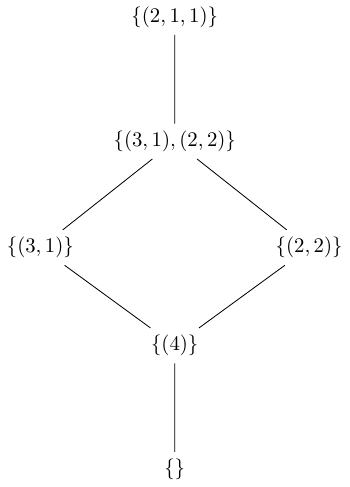}
	\else
		\begin{tikzpicture}[node distance=1.5cm]
			\node(0){$\{\}$};
			\node(4)[above=of 0]{$\{(4)\}$};
			\node(31)[above left=of 4]{$\{(3,1)\}$};
			\node(22)[above right=of 4]{$\{(2,2)\}$};
			\node(3122)[above=of $(31)!0.5!(22)$]{$\{(3,1),(2,2)\}$};
			\node(211)[above=of 3122]{$\{(2,1,1)\}$};
			\draw(0)--(4)--(31);\draw(4)--(22);\draw(31)--(3122);\draw(22)--(3122)--(211);
		\end{tikzpicture}
	\fi
}

\newcommand{\figtwo}{
	\centering
	\ifnum\aspdf=1
		\includegraphics[scale=0.7]{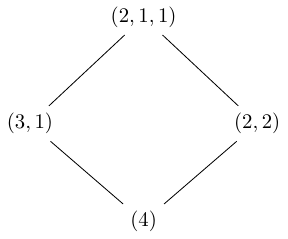}
	\else
		\begin{tikzpicture}[node distance=1.5cm]
			\node(4){$(4)$};
			\node(31)[above left=of 4]{$(3,1)$};
			\node(22)[above right=of 4]{$(2,2)$};
			\node(211)[above=of $(31)!0.5!(22)$]{$(2,1,1)$};
			\draw(4)--(31);\draw(4)--(22);\draw(31)--(211);\draw(22)--(211);
		\end{tikzpicture}
	\fi
}

\newcommand{\figone}{
	\centering
	\ifnum\aspdf=1$\begin{array}{c}
		J_{(1,1,1)}=\left\{
			\vcdraw{\includegraphics[scale=0.7]{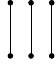}}\,,
			\vcdraw{\includegraphics[scale=0.7]{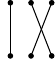}}\,,
			\vcdraw{\includegraphics[scale=0.7]{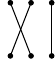}}\,,
			\vcdraw{\includegraphics[scale=0.7]{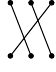}}\,,
			\vcdraw{\includegraphics[scale=0.7]{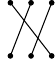}}\,,
			\vcdraw{\includegraphics[scale=0.7]{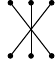}}\,
		\right\}=\sym_3\\[0.5cm]
		J_{(2,1)}=\left\{
			\vcdraw{\includegraphics[scale=0.7]{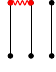}}\,,
			\vcdraw{\includegraphics[scale=0.7]{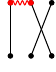}}\,,
			\vcdraw{\includegraphics[scale=0.7]{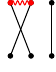}}\,,
			\vcdraw{\includegraphics[scale=0.7]{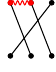}}\,,
			\vcdraw{\includegraphics[scale=0.7]{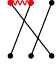}}\,,
			\vcdraw{\includegraphics[scale=0.7]{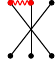}}\,,
			\vcdraw{\includegraphics[scale=0.7]{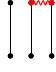}}\,,
			\vcdraw{\includegraphics[scale=0.7]{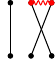}}\,,
			\vcdraw{\includegraphics[scale=0.7]{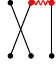}}\,,
			\vcdraw{\includegraphics[scale=0.7]{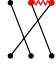}}\,,
			\vcdraw{\includegraphics[scale=0.7]{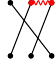}}\,,
			\vcdraw{\includegraphics[scale=0.7]{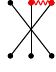}}\,,
			\vcdraw{\includegraphics[scale=0.7]{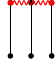}}\,,
			\vcdraw{\includegraphics[scale=0.7]{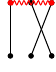}}\,,
			\vcdraw{\includegraphics[scale=0.7]{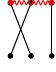}}\,,
			\vcdraw{\includegraphics[scale=0.7]{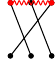}}\,,
			\vcdraw{\includegraphics[scale=0.7]{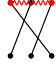}}\,,
			\vcdraw{\includegraphics[scale=0.7]{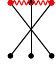}}\,
		\right\}\\[0.5cm]
		J_{(3)}=\left\{
			\vcdraw{\includegraphics[scale=0.7]{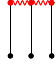}}\,,
			\vcdraw{\includegraphics[scale=0.7]{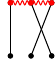}}\,,
			\vcdraw{\includegraphics[scale=0.7]{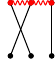}}\,,
			\vcdraw{\includegraphics[scale=0.7]{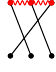}}\,,
			\vcdraw{\includegraphics[scale=0.7]{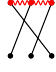}}\,,
			\vcdraw{\includegraphics[scale=0.7]{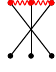}}\,
		\right\}
	\end{array}$\else
		\vpartition[type=0]{{1,-1},{2,-2},{3,-3}}
		\vpartition[type=0]{{1,-1},{2,-3},{3,-2}}
		\vpartition[type=0]{{1,-2},{2,-1},{3,-3}}
		\vpartition[type=0]{{1,-2},{2,-3},{3,-1}}
		\vpartition[type=0]{{1,-3},{2,-1},{3,-2}}
		\vpartition[type=0]{{1,-3},{2,-2},{3,-1}}
		\begin{tikzpicture}
			\vpartition[tkzpic=0,type=0]{{1,-1},{2,-2},{3,-3}}
			\tie[snake=true,snakelen=2.3,style=solid]{{1,1},{2,1}}
		\end{tikzpicture}
		\begin{tikzpicture}
			\vpartition[tkzpic=0,type=0]{{1,-1},{2,-3},{3,-2}}
			\tie[snake=true,snakelen=2.3,style=solid]{{1,1},{2,1}}
		\end{tikzpicture}
		\begin{tikzpicture}
			\vpartition[tkzpic=0,type=0]{{1,-2},{2,-1},{3,-3}}
			\tie[snake=true,snakelen=2.3,style=solid]{{1,1},{2,1}}
		\end{tikzpicture}
		\begin{tikzpicture}
			\vpartition[tkzpic=0,type=0]{{1,-2},{2,-3},{3,-1}}
			\tie[snake=true,snakelen=2.3,style=solid]{{1,1},{2,1}}
		\end{tikzpicture}
		\begin{tikzpicture}
			\vpartition[tkzpic=0,type=0]{{1,-3},{2,-1},{3,-2}}
			\tie[snake=true,snakelen=2.3,style=solid]{{1,1},{2,1}}
		\end{tikzpicture}
		\begin{tikzpicture}
			\vpartition[tkzpic=0,type=0]{{1,-3},{2,-2},{3,-1}}
			\tie[snake=true,snakelen=2.3,style=solid]{{1,1},{2,1}}
		\end{tikzpicture}
		\begin{tikzpicture}
			\vpartition[tkzpic=0,type=0]{{1,-1},{2,-2},{3,-3}}
			\tie[snake=true,snakelen=2.3,style=solid]{{2,1},{3,1}}
		\end{tikzpicture}
		\begin{tikzpicture}
			\vpartition[tkzpic=0,type=0]{{1,-1},{2,-3},{3,-2}}
			\tie[snake=true,snakelen=2.3,style=solid]{{2,1},{3,1}}
		\end{tikzpicture}
		\begin{tikzpicture}
			\vpartition[tkzpic=0,type=0]{{1,-2},{2,-1},{3,-3}}
			\tie[snake=true,snakelen=2.3,style=solid]{{2,1},{3,1}}
		\end{tikzpicture}
		\begin{tikzpicture}
			\vpartition[tkzpic=0,type=0]{{1,-2},{2,-3},{3,-1}}
			\tie[snake=true,snakelen=2.3,style=solid]{{2,1},{3,1}}
		\end{tikzpicture}
		\begin{tikzpicture}
			\vpartition[tkzpic=0,type=0]{{1,-3},{2,-1},{3,-2}}
			\tie[snake=true,snakelen=2.3,style=solid]{{2,1},{3,1}}
		\end{tikzpicture}
		\begin{tikzpicture}
			\vpartition[tkzpic=0,type=0]{{1,-3},{2,-2},{3,-1}}
			\tie[snake=true,snakelen=2.3,style=solid]{{2,1},{3,1}}
		\end{tikzpicture}
		\begin{tikzpicture}
			\vpartition[tkzpic=0,type=0]{{1,-1},{2,-2},{3,-3}}
			\tie[snake=true,snakelen=2.3,style=solid]{{1,1},{3,1}}	
		\end{tikzpicture}
		\begin{tikzpicture}
			\vpartition[tkzpic=0,type=0]{{1,-1},{2,-3},{3,-2}}
			\tie[snake=true,snakelen=2.3,style=solid]{{1,1},{3,1}}	
		\end{tikzpicture}
		\begin{tikzpicture}
			\vpartition[tkzpic=0,type=0]{{1,-2},{2,-1},{3,-3}}
			\tie[snake=true,snakelen=2.3,style=solid]{{1,1},{3,1}}	
		\end{tikzpicture}
		\begin{tikzpicture}
			\vpartition[tkzpic=0,type=0]{{1,-2},{2,-3},{3,-1}}
			\tie[snake=true,snakelen=2.3,style=solid]{{1,1},{3,1}}	
		\end{tikzpicture}
		\begin{tikzpicture}
			\vpartition[tkzpic=0,type=0]{{1,-3},{2,-1},{3,-2}}
			\tie[snake=true,snakelen=2.3,style=solid]{{1,1},{3,1}}	
		\end{tikzpicture}
		\begin{tikzpicture}
			\vpartition[tkzpic=0,type=0]{{1,-3},{2,-2},{3,-1}}
			\tie[snake=true,snakelen=2.3,style=solid]{{1,1},{3,1}}	
		\end{tikzpicture}
		\begin{tikzpicture}
			\vpartition[tkzpic=0,type=0]{{1,-1},{2,-2},{3,-3}}
			\tie[snake=true,snakelen=2.3,style=solid]{{1,1},{2,1},{3,1}}
		\end{tikzpicture}
		\begin{tikzpicture}
			\vpartition[tkzpic=0,type=0]{{1,-1},{2,-3},{3,-2}}
			\tie[snake=true,snakelen=2.3,style=solid]{{1,1},{2,1},{3,1}}
		\end{tikzpicture}
		\begin{tikzpicture}
			\vpartition[tkzpic=0,type=0]{{1,-2},{2,-1},{3,-3}}
			\tie[snake=true,snakelen=2.3,style=solid]{{1,1},{2,1},{3,1}}
		\end{tikzpicture}
		\begin{tikzpicture}
			\vpartition[tkzpic=0,type=0]{{1,-2},{2,-3},{3,-1}}
			\tie[snake=true,snakelen=2.3,style=solid]{{1,1},{2,1},{3,1}}
		\end{tikzpicture}
		\begin{tikzpicture}
			\vpartition[tkzpic=0,type=0]{{1,-3},{2,-1},{3,-2}}
			\tie[snake=true,snakelen=2.3,style=solid]{{1,1},{2,1},{3,1}}
		\end{tikzpicture}
		\begin{tikzpicture}
			\vpartition[tkzpic=0,type=0]{{1,-3},{2,-2},{3,-1}}
			\tie[snake=true,snakelen=2.3,style=solid]{{1,1},{2,1},{3,1}}
		\end{tikzpicture}
	\fi
}

\newcommand{\figzer}{
	\centering
	\ifnum\aspdf=1$
		\vcdraw{\includegraphics[scale=0.9]{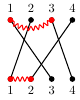}}\,\,=\,
		\vcdraw{\includegraphics[scale=0.9]{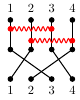}}\,\,=\,
		\vcdraw{\includegraphics[scale=0.9]{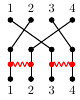}}
	$\else
		\begin{tikzpicture}
			\vpartition[tkzpic=0,type=2]{{2,-1},{4,-2},{1,-3},{3,-4}}
			\tie[snake=true,snakelen=2.3,style=solid,bend=-40]{{1,1},{3,1}}
			\tie[snake=true,snakelen=2.3,style=solid]{{1,0},{2,0}}
		\end{tikzpicture}
		\begin{tikzpicture}
			\vpartition[tkzpic=0,type=1,height=0.5,floor=1,bullb=0]{{1,-1},{2,-2},{3,-3},{4,-4}}
			\vpartition[tkzpic=0,type=-1,height=0.5]{{2,-1},{4,-2},{1,-3},{3,-4}}
			\tie[snake=true,snakelen=2.3,style=solid]{{1,0.85},{3,0.85}}
			\tie[snake=true,snakelen=2.3,style=solid]{{2,0.65},{4,0.65}}
		\end{tikzpicture}
		\begin{tikzpicture}
			\vpartition[tkzpic=0,type=1,height=0.5,floor=1,bullb=0]{{2,-1},{4,-2},{1,-3},{3,-4}}
			\vpartition[tkzpic=0,type=-1,height=0.5]{{1,-1},{2,-2},{3,-3},{4,-4}}
			\tie[snake=true,snakelen=2.3,style=solid]{{1,0.25},{2,0.25}}
			\tie[snake=true,snakelen=2.3,style=solid]{{3,0.25},{4,0.25}}
		\end{tikzpicture}
	\fi
}

\begin{document}

\maketitle

\begin{abstract}
We show that the submonoids of the tied-symmetric monoid containing the symmetric group form a distributive lattice. Furthermore, we determine a minimal generating set for any such submonoid.
\end{abstract}

\medskip{\small
\noindent\textbf{Keywords:} tied-symmetric monoid; symmetric group; partition monoid; lattice of submonoids.\\
\noindent\textbf{2020 Mathematics Subject Classification:} 20M20; 20M10, 05A18, 20M05.}

\section{Introduction}\label{011}

Deformation algebras play a central role in the classification of algebraic structures, knot theory, and representation theory. A prominent example is the \emph{algebra of braids and ties} $\E_n(u)$, which was introduced by Aicardi and Juyumaya to construct representations of the braid group and invariants for classical and singular links~\cite{AiJu00,AiJu16b,AiJu21}. The algebraic behavior of $\E_n(u)$ is intrinsically tied to its diagrammatic foundation; indeed, it can be realized as a quotient of the monoid algebra of the so-called \emph{tied braid monoid} $T\B_n$, which extends the classical braid group $\B_n$~\cite{AiJu16,AiJu21}.

The tied-symmetric monoid $T\sym_n$ emerges naturally when specializing the parameter to $u=1$ in the algebra of braids and ties, exactly recovering its monoid algebra, $\E_n(1)=\CC[T\sym_n]$. Furthermore, by imposing the Coxeter involution relations on the standard braid generators, $T\B_n$ quotients down to $T\sym_n$~\cite{AiArJu24}. Due to its combinatorial richness, $T\sym_n$ has become an object of independent study, providing a structural bridge between knot-theoretic algebras and classical partition monoids~\cite{ArJu21,AiArJu23,AiArJu24,ArEs26,PreArJu26}.

Recently, it was established that the well-known monoid of uniform block permutations $\U_n$ can be realized as a quotient of the tied-symmetric monoid $T\sym_n$~\cite[Proposition~3.1]{PreArJu26}. Due to this, many of the algebraic and combinatorial properties of $\U_n$ can be lifted and extended to the framework of $T\sym_n$~\cite{OrSaSchZa22,OrSaSchZa25}.

To formalize this framework, for a positive integer $n$, let $[n]=\{1,\ldots,n\}$. We denote by $\sym_n$ the \emph{symmetric group} of permutations of $[n]$. As shown in \cite{Moo896}, it admits a presentation by generators $s_1,\ldots,s_{n-1}$ subject to the classical Coxeter relations, where $s_i$ denotes the standard simple transposition interchanging $i$ and $i+1$. 
A \emph{set partition} of $[n]$ is a collection of pairwise disjoint, non-empty subsets, ordered by their minimum elements and called \emph{blocks}, whose union is $[n]$. The collection of all such set partitions, denoted by $\P_n$, forms a lattice ordered by refinement, which endows it with the structure of an idempotent commutative monoid under the supremum operation.

The \emph{tied-symmetric monoid} is defined as the semidirect product $T\sym_n=\P_n\rtimes\sym_n$, where the symmetric group acts naturally on $\P_n$ by $s\cdot(I_1,\ldots,I_k)=(s(I_1),\ldots,s(I_k))$~\cite[Subsubsection~5.1.1]{AiArJu23}. The elements of $T\sym_n$ are usually represented by \emph{strand diagrams of ties}. See Figure~\ref{000}.\begin{figure}[H]\figzer\caption{An element of the tied-symmetric monoid $T\sym_n$.}\label{000}\end{figure}%$(\{1,3\},\{2,4\})s_1s_3s_2=s_1s_3s_2(\{1,2\},\{3,4\})$

In this paper, we characterize the family of submonoids of $T\sym_n$ that contain the symmetric group $\sym_n$. Our first main result extends a result by Orellana, Saliola, Schilling, and Zabrocki, which establishes that the submonoids of $\U_n$ containing $\sym_n$ form a distributive lattice.
\begin{thm}\label{008}
The union of two submonoids of $T\sym_n$ containing $\sym_n$ is also a submonoid of $T\sym_n$. Consequently, these submonoids form a distributive lattice under the operations of union and intersection.
\end{thm}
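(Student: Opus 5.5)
Elements of $T\sym_n=\P_n\rtimes\sym_n$ are pairs $(I,\sigma)$ with $I\in\P_n$ and $\sigma\in\sym_n$. The product is $(I,\sigma)(J,\tau)=(I\vee\sigma(J),\sigma\tau)$. The plan is to show that a submonoid $M\supseteq\sym_n$ is determined by a downward-looking combinatorial datum: the set of integer partitions (block types) of the set partitions it contains. The union will then be closed under products for a simple reason: every product of two elements of $M_1\cup M_2$ lies in whichever factor's submonoid carries the "coarser" information.

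First, since $\sym_n\subseteq M$, multiplying on the right by permutations shows that $(I,\sigma)\in M$ if and only if $(I,1)\in M$. So $M$ is determined by $P(M)=\{I\in\P_n:(I,1)\in M\}$. Conjugating by permutations shows that $P(M)$ is closed under the $\sym_n$-action. Hence $P(M)$ is a union of orbits, which are indexed by integer partitions $\lambda$ of $n$, the multiset of block sizes.

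Second, I would show that $P(M)$ is an up-set for refinement order on orbits. More precisely, the key lemma is: if $I\in P(M)$ and $K$ is coarser than $I$, then $K\in P(M)$. To see this, note that $M$ contains $(I,1)(\sigma(I),1)=(I\vee\sigma(I),1)$ for every $\sigma$. Any coarsening of $I$ merges blocks, so it suffices to merge two blocks $A,B$ of $I$. Take $\sigma$ to be a transposition swapping some $a\in A$ and $b\in B$. Then $\sigma(I)$ has blocks that differ from those of $I$ only by exchanging $a$ and $b$, so $I\vee\sigma(I)$ is exactly $I$ with $A$ and $B$ merged. (If $|A|=|B|=1$, then $\sigma(I)=I$, and one must instead use a product with a non-matching partition. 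I expect this edge case to be the main obstacle, and I would check it carefully.) Indeed, when $A=\{a\}$ and $B=\{b\}$ are both singletons, no element of the orbit of $I$ glues $a$ to $b$ unless $I$ has a non-singleton block $C$. In that case, moving $a$ into $C$ via some $\sigma$ yields a coarser partition with a different merge pattern. So the lemma is really: the orbit set of $P(M)$ is closed under joins $I\vee\sigma(I')$ with $I,I'\in P(M)$. That closure is all the monoid property requires, and is exactly what I will use.

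Third, and this is the real claim, I would prove that for $I\in P(M_1)$ and $J\in P(M_2)$, the element $I\vee\sigma(J)$ lies in $P(M_1)\cup P(M_2)$. My approach: every join of two partitions is obtainable as a join within a single orbit. Concretely, I would show that $I\vee\sigma(J)$ is coarser than $I$, and try to realize it as an iterated product $(I,1)(\tau_1(I),1)\cdots(\tau_k(I),1)$ (which lies in $M_1$), or symmetrically from copies of $J$ (which lies in $M_2$). The choice between $M_1$ and $M_2$ is made according to which of $I,J$ has a block of size at least $2$ in the right place. If $I$ is the discrete partition, the product is a permutation of $J$, so it lies in $M_2$, and vice versa. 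Otherwise, I would argue that once $I$ has a non-singleton block, self-joins of conjugates of $I$ can produce any coarsening of $I$ that is not the discrete one. The idea is to merge blocks step by step using transpositions that swap an element of a non-singleton block with an element outside it, which produces exactly a merge. Since $I\vee\sigma(J)$ coarsens $I$, it lies in $P(M_1)$. If $I$ is discrete, the product lies in $P(M_2)$. This establishes closure of $M_1\cup M_2$ under products, and the identity lies in it trivially.

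Finally, since unions and intersections of such submonoids are again such submonoids, the family is a sublattice of the Boolean lattice of subsets of $T\sym_n$. Lattice operations given by set union and intersection are distributive, which proves the theorem. The main obstacle is the step-by-step merging argument in the third step, in particular checking that merging two singleton blocks is achievable using a non-singleton block.
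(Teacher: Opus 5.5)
Your first step (reducing to idempotents and using $(I,\sigma)(J,\tau)=(I\vee\sigma(J),\sigma\tau)$) matches the paper. The second and third steps, however, rest on a false claim, and the singleton case you flagged is a real obstruction, not an edge case. You assert that once $I$ has a non-singleton block, joins of $\sym_n$-conjugates of $I$ produce every coarsening of $I$, so that $I\vee\sigma(J)\in P(M_1)$ whenever $I$ is not discrete.

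Here is a counterexample. Take $n=5$, $I=(\{1,2,3\},\{4\},\{5\})$, $J=(\{1\},\{2\},\{3\},\{4,5\})$, and let $M_1$, $M_2$ be the submonoids generated by $\sym_5\cup\{I\}$ and $\sym_5\cup\{J\}$. The non-identity idempotents of $M_1$ are joins $d_1\vee\cdots\vee d_r$ of conjugates of $I$, each of type $(3,1,1)$. A two-element block of such a join is a union of blocks of every $d_j$. It is therefore a union of singletons of every $d_j$, hence a singleton in the join, which is absurd. So $I\vee J=(\{1,2,3\},\{4,5\})$ is a coarsening of the non-discrete $I$ that does \emph{not} lie in $M_1$. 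It lies in $M_2$ instead, as the join of the conjugates of $J$ whose only non-singleton blocks are $\{1,2\}$, $\{2,3\}$ and $\{4,5\}$. Your rule ``$M_1$ unless $I$ is discrete'' is therefore wrong, and the lemma in your second step fails for the same reason.

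The missing idea is the invariant $\varsigma(e)$, the smallest size of a non-singleton block of $e$. The same argument shows that joins of conjugates of $e$ never have a non-singleton block smaller than $\varsigma(e)$. It also gives $\varsigma(e\vee d)\geq\min(\varsigma(e),\varsigma(d))$, because a non-singleton block of $e\vee d$ must contain a non-singleton block of $e$ or of $d$. The correct rule is to place $e\vee d$ in the submonoid of whichever factor has the smaller $\varsigma$; this is exactly the content of the paper's contradiction argument.

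To finish, you also need the converse: any set partition whose type is coarser than $\|e\|$ and whose non-singleton blocks all have size at least $\varsigma(e)$ lies in the submonoid generated by $\sym_n\cup\{e\}$. The paper gets this from the characterization \eqref{006} of $\preceq$ in \cite[Theorem~3.4]{OrSaSchZa25}, together with Lemma~\ref{005}. Your transposition trick only merges a non-singleton block with another block. It does not cover blocks of $e\vee d$ built entirely from singletons of $e$, such as $\{3,4\}$ for $e=(\{1,2\},\{3\},\{4\},\{5\})$ and $d=(\{1\},\{2\},\{3,4\},\{5\})$; there you need joins with genuinely moved conjugates of $e$. Your final deduction of distributivity from closure under union and intersection is fine.
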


Furthermore, we solve the problem of determining a minimal generating set for any such submonoid. The structure of these submonoids is governed by integer partitions. A \emph{partition} of $n$ is a weakly decreasing sequence of positive integers, called \emph{parts}, whose sum is $n$. The collection of partitions of $n$ is denoted by $\Par_n$. The \emph{type} of a set partition $e\in\P_n$ is the integer partition $\|e\|$ formed by the sizes of the blocks of $e$ sorted in non-increasing order. We utilize the partial order on $\Par_n$ introduced in~\cite[Definition~3.1]{OrSaSchZa25}. Given partitions $\lambda,\lambda'\in\Par_n$, we write $\lambda\preceq\lambda'$ if there exist set partitions $d_1,\ldots,d_r\in\P_n$ such that $\|d_1\cdots d_r\|=\lambda$ and $\|d_1\|=\cdots=\|d_r\|=\lambda'$. This relation defines a partial order on $\Par_n$~\cite[Proposition~3.3]{OrSaSchZa25}. We set $\Par_n^*=\Par_n\setminus\{(1,\ldots,1)\}$. A \emph{down-set} is a subset $I$ that is closed under smaller elements; that is, if $y \in I$ and $x\preceq y$, then $x\in I$. Our second main result reduces the generation problem to a finite antichain of idempotents.

\begin{thm}\label{010}
Let $I$ be a down-set of $(\Par_n^*,\preceq)$ with $\max(I)=\{\lambda_1,\ldots,\lambda_k\}$, and let $M$ be its corresponding submonoid of $T\sym_n$ containing $\sym_n$. For each $i\in[k]$, fix an idempotent $h_i \in \P_n$ such that $\|h_i\|=\lambda_i$. Then, $M$ is generated by the simple transpositions $s_1,\ldots,s_{n-1}$ together with the idempotents $h_1,\ldots,h_k$.
\end{thm}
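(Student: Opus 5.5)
The plan is to first pin down the correspondence between down-sets and submonoids that the statement takes for granted, and then show that $s_1,\ldots,s_{n-1},h_1,\ldots,h_k$ generate all of $M$.

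For a submonoid $M\supseteq\sym_n$ of $T\sym_n$, every element has the form $e\sigma$ with $e\in\P_n$ and $\sigma\in\sym_n$. Since $\sigma^{-1}\in M$, we get $e\sigma\in M$ if and only if $e\in M$. Hence $M=(M\cap\P_n)\sym_n$, and $M$ is determined by the set of idempotents it contains. This set is closed under conjugation $\sigma e\sigma^{-1}=\sigma\cdot e$, so membership of $e$ depends only on its type $\|e\|$. Take $I=\{\|e\|: e\in M\cap\P_n\}\setminus\{(1,\ldots,1)\}$; the finest partition is the identity and always lies in $M$. I will verify directly that $I$ is a down-set. If $\lambda\preceq\lambda'$ with $\lambda'\in I$, choose $d_1,\ldots,d_r$ of type $\lambda'$ with $\|d_1\cdots d_r\|=\lambda$. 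Each $d_j$ is in $M$, being a conjugate of an element of type $\lambda'$ in $M$. Therefore $d_1\cdots d_r\in M$, which gives $\lambda\in I$.

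Conversely, I will check that for any down-set $I$ the set $\{e\sigma:\|e\|\in I\cup\{(1^n)\}\}$ is closed under multiplication. Using $(e\sigma)(f\tau)=e(\sigma\cdot f)\sigma\tau$, this reduces to a single claim: if $\|e\|,\|f\|\in I$, then $\|ef\|\in I$. This is the step I expect to be the main obstacle, because $e$ and $f$ may have different types, while $\preceq$ is defined only through products of equal types. Presumably this is exactly the content behind Theorem~\ref{008}, so I would invoke it, or the corresponding lemma of~\cite{OrSaSchZa25}. Its content is that $\|ef\|$ lies below $\|e\|$ or below $\|f\|$ in $\preceq$. The idea is to realize $ef$ as a product of conjugates of only one of $e$ or $f$, which exploits the fact that merging blocks is monotone.

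With the correspondence in hand, the generation claim is short. Let $N=\langle s_1,\ldots,s_{n-1},h_1,\ldots,h_k\rangle$. The inclusion $N\subseteq M$ holds because $\lambda_i\in I$, and $\sym_n$ is generated by the $s_i$~\cite{Moo896}. For the reverse inclusion it suffices, by the first paragraph, to show that every $e\in\P_n$ with $\|e\|\in I$ lies in $N$. Since $I$ is finite, $\|e\|\preceq\lambda_i$ for some maximal element $\lambda_i$. By definition of $\preceq$ there are $d_1,\ldots,d_r$ with $\|d_j\|=\lambda_i$ and $\|d_1\cdots d_r\|=\|e\|$. Each $d_j=\sigma_j h_i\sigma_j^{-1}$ for some $\sigma_j\in\sym_n$, since set partitions of the same type are conjugate, so each $d_j\in N$. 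The product $d_1\cdots d_r$ has the same type as $e$, so $e=\tau(d_1\cdots d_r)\tau^{-1}$ for some $\tau\in\sym_n$, which gives $e\in N$. Finally, every element $e\sigma$ of $M$ lies in $N$, so $M=N$.
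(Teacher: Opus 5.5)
Your proposal is correct and is essentially the paper's argument: $N\subseteq M$ because each $h_i$ lies in $M$, and conversely every idempotent $e$ of $M$ with $\|e\|\preceq\lambda_i$ is a conjugate of a product $d_1\cdots d_r$ of conjugates of $h_i$, hence lies in $N$. Conjugating the whole product by $\tau$, as you do, works just as well as the paper's distributing of the conjugation over the factors, and the down-set/submonoid correspondence you reconstruct (including the closure step you defer to Theorem~\ref{008}) is exactly what the paper establishes beforehand in Corollary~\ref{007}.
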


\section{The submonoids of $T\sym_n$ containing $\sym_n$}\label{012}

In this section, we study the submonoids of $T\sym_n$ containing $\sym_n$ and prove our main results. Specifically, Theorem~\ref{008} is proven in Subsection~\ref{013}, while Theorem~\ref{010} is proven in Subsection~\ref{014}.

By definition of the tied-symmetric monoid $T\sym_n=\P_n\rtimes\sym_n$, for every $x\in T\sym_n$ there exists a unique $s\in\sym_n$ and unique $e,e'\in\P_n$ such that $x=es=se'$. In fact, these elements are related by the action, as $e'=s^{-1}\cdot e$. For instance, for the element shown in Figure~\ref{000}, we have $s=s_1s_3s_2$ with $e=(\{1,3\},\{2,4\})$ and $e'=(\{1,2\},\{3,4\})$.

\begin{pro}\label{009}
Let $M$ be a submonoid of $T\sym_n$ containing $\sym_n$, and let $\P_n^M=\P_n\cap M$ be the submonoid of idempotents of $M$. Then $M$ decomposes as the semidirect product $M=\P_n^M\rtimes\sym_n$.
\end{pro}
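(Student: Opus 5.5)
The plan is to use the unique factorisation $x=es$, with $e\in\P_n$ and $s\in\sym_n$, recalled just before the statement, and to show that both factors of an element of $M$ already lie in $M$. Take $x\in M$ and write $x=es$. Because $\sym_n\subseteq M$, the inverse $s^{-1}$ is in $M$. As $M$ is closed under multiplication, $e=xs^{-1}\in M$, so $e\in\P_n\cap M=\P_n^M$. This shows $M\subseteq \P_n^M\sym_n$. The reverse inclusion $\P_n^M\sym_n\subseteq M$ is immediate from closure under products. Hence $M=\P_n^M\sym_n$ as sets, and the factorisation of each element is unique, since it is unique in $T\sym_n$.

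Next we check that $\P_n^M$ is a submonoid of $\P_n$ stable under the action of $\sym_n$. It contains the identity, which is the finest partition, and it is closed under the product (the supremum), being the intersection of two submonoids of $T\sym_n$. For stability, take $e\in\P_n^M$ and $s\in\sym_n$. In $T\sym_n$ we have $ses^{-1}=s\cdot e$; this is the relation $x=es=se'$ with $e'=s^{-1}\cdot e$. The left-hand side $ses^{-1}$ lies in $M$, and $s\cdot e$ lies in $\P_n$, so $s\cdot e\in\P_n^M$.

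It follows that $\P_n^M\rtimes\sym_n$ is well defined. The multiplication map $(e,s)\mapsto es$ from $\P_n^M\rtimes\sym_n$ to $M$ is the restriction of the isomorphism $\P_n\rtimes\sym_n\cong T\sym_n$, so it is a monoid homomorphism. It is injective by uniqueness of the factorisation and surjective by the first step, so it is an isomorphism. The only point needing care is the identity $ses^{-1}=s\cdot e$ inside $T\sym_n$, which is part of the definition of the semidirect product. The statement that $\P_n^M$ consists of the idempotents of $M$ also holds: $(es)^2=es$ forces $s=1$ by comparing $\sym_n$-components, and conversely every element of $\P_n$ is idempotent. There is no real obstacle.
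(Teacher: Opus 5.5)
Your proof is correct and follows essentially the same route as the paper: you factor $x=es$, recover $e=xs^{-1}\in M$, and obtain $\sym_n$-stability of $\P_n^M$ from $s\cdot e=ses^{-1}\in M$. Your extra checks are correct details that the paper leaves implicit: that the multiplication map $\P_n^M\rtimes\sym_n\to M$ is an isomorphism, and that $\P_n\cap M$ is exactly the set of idempotents of $M$.
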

\begin{proof}
Since $M\subseteq T\sym_n$ and $T\sym_n=\P_n\rtimes\sym_n$, every element in $M$ can be uniquely written as $es$ for some $e\in\P_n$ and $s\in\sym_n$. Because $\sym_n\subseteq M$, we have $e=(es)s^{-1}\in M$. Thus, $e\in P_n\cap M=\P_n^M$. This proves that every element of $M$ can be uniquely factored as a product of an element in $\P_n^M$ and an element in $\sym_n$. Furthermore, given $e\in\P_n^M$ and $s\in\sym_n$, we have $s\cdot e\in\P_n^M$ because $s\cdot e\in\P_n$ and $s,e,s^{-1}\in M$. Therefore, $\P_n^M$ is invariant under the action of $\sym_n$, which allows us to conclude that $M=\P_n^M\rtimes\sym_n$.
\end{proof}

A monoid $M$ is \emph{inverse} if for every $x\in M$ there exists unique $x^*\in M$, called the \emph{inverse} of $x$, such that $xx^*x=x$ and $x^*xx^*=x^*$. As shown in~\cite[Corollary~1]{AiArJu24}, the tied-symmetric monoid $T\sym_n$ is inverse. Indeed, if $x=es\in T\sym_n$, then $x^*=s^{-1}e=e's^{-1}$, where $e'=s^{-1}\cdot e$.

\begin{pro}
Every submonoid of $T\sym_n$ containing $\sym_n$ is inverse.
\end{pro}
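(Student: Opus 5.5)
The plan is to show that $M$ is closed under the inverse operation of $T\sym_n$, and then transfer uniqueness of inverses from $T\sym_n$ to $M$.

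Let $M$ be a submonoid of $T\sym_n$ containing $\sym_n$, and take $x\in M$. By Proposition~\ref{009}, write $x=es$ with $e\in\P_n^M$ and $s\in\sym_n$. In $T\sym_n$, as recalled above from~\cite[Corollary~1]{AiArJu24}, the inverse of $x$ is $x^*=s^{-1}e$. Both factors lie in $M$: we have $s^{-1}\in\sym_n\subseteq M$, and $e\in\P_n^M\subseteq M$. Since $M$ is closed under products, $x^*\in M$.

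Existence then follows at once. The identities $xx^*x=x$ and $x^*xx^*=x^*$ hold in $T\sym_n$, and hence they hold in $M$, so every $x\in M$ has an inverse inside $M$.

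For uniqueness, suppose $y\in M$ also satisfies $xyx=x$ and $yxy=y$. Then $y$ is an inverse of $x$ in $T\sym_n$. Since $T\sym_n$ is inverse, its inverses are unique, so $y=x^*$. Therefore $x^*$ is the unique inverse of $x$ in $M$, and $M$ is inverse.

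There is no real obstacle; the only point requiring care is that the inverse $s^{-1}e$ is a product of elements that are known to lie in $M$. If one prefers to avoid Proposition~\ref{009}, the same conclusion follows by noting that $e=xs^{-1}\in M$ directly.
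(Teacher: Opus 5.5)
Your proof is correct and follows the paper's argument: write $x=es$, note that $e=xs^{-1}\in M$ (or invoke Proposition~\ref{009}), and conclude that $x^*=s^{-1}e\in M$. You also make explicit that uniqueness of inverses in $M$ is inherited from $T\sym_n$, a step the paper leaves implicit, and you handle it correctly.
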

\begin{proof}
Let $M$ be a submonoid of $T\sym_n$ containing $\sym_n$, and let $x\in M$. Since $M\subseteq T\sym_n$, the element $x$ can be uniquely factored as $x=es$ for some $e\in\P_n$ and $s\in\sym_n$. Because $\sym_n\subseteq M$, we have $s^{-1}\in M$, and then $e=(es)s^{-1}\in M$. Hence, their product $x^*=s^{-1}e$ must also belong to $M$. Therefore $M$ is inverse.
\end{proof}

Two elements $x,y$ in a monoid $M$ are said to be \emph{$\J$-equivalent} if $MxM=MyM$. This defines an equivalence relation on $M$ whose equivalence classes are called the \emph{$\J$-classes} of $M$.

\begin{pro}[{\cite[Subsection~6.1]{PreArJu26}}]\label{003}
The $\J$-classes of $T\sym_n$ are indexed by the partitions of $n$, namely $J_\lambda$ with $\lambda\in\Par_n$. More precisely, $J_\lambda=\{s e s' \mid s,s'\in\sym_n\}$, where $e\in\P_n$ is any fixed set partition satisfying $\|e\|=\lambda$.
\end{pro}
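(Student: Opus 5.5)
We must show two things about the $\J$-classes of $T\sym_n$: the class of an element $x=es$ is $J_{\|e\|}$, and distinct partitions $\lambda$ give distinct classes. The argument rests on two facts: the symmetric group acts on $\P_n$ preserving types, and every element of $T\sym_n$ factors as $es$ with $e\in\P_n$, $s\in\sym_n$.

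First we describe the sets $\{ses'\mid s,s'\in\sym_n\}$. If $e,f\in\P_n$ have the same type, then some $t\in\sym_n$ satisfies $f=t\cdot e=tet^{-1}$, since permuting $[n]$ carries blocks of one set partition onto blocks of another with the same sizes. Hence $\{ses'\}$ depends only on $\lambda=\|e\|$, so $J_\lambda$ is well defined. Every $x=es\in T\sym_n$ lies in $J_{\|e\|}$ (take $s=1$ and $s'=s$), so the sets $J_\lambda$ cover $T\sym_n$.

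Next we show they are disjoint. For $x=ses'$, write $ses'=(s\cdot e)\,ss'$. By uniqueness of the factorization, the idempotent part of $x$ is $s\cdot e$, whose type is $\|e\|$. So the type of the $\P_n$-component of $x$ is an invariant of $x$, and it equals $\lambda$ on $J_\lambda$. The sets $J_\lambda$ therefore partition $T\sym_n$.

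We then show each $J_\lambda$ lies in a single $\J$-class. Units of $T\sym_n$ give $T\sym_n(ses')T\sym_n=T\sym_n\,e\,T\sym_n$, so every element of $J_\lambda$ is $\J$-equivalent to $e$.

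Finally, elements of different $J_\lambda$ are not $\J$-equivalent; this is the main step. Suppose $e\,\J\,f$ with $e,f\in\P_n$. Then $f=xey$ for some $x=as$ and $y=bt$. Moving permutations to the right using $s\,g=(s\cdot g)\,s$, we get
\[
f=a\,(s\cdot e)\,(ss'\cdot b)\,u
\]
for some $u\in\sym_n$. Uniqueness of the factorization forces $u=1$ and $f=a\,(s\cdot e)\,c$ with $c\in\P_n$. Since the product in $\P_n$ is the join, $f$ is coarser than or equal to $s\cdot e$. The blocks of $f$ are unions of blocks of $s\cdot e$, so $f$ has at most as many blocks as $e$. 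Symmetrically, $s\cdot e$ is coarser than or equal to some conjugate of $f$. Then the number of blocks is equal, and a coarsening with the same number of blocks is an equality. Hence $f=s\cdot e$, so $\|f\|=\|e\|$.

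Combining the steps, the $\J$-classes of $T\sym_n$ are exactly the sets $J_\lambda$ with $\lambda\in\Par_n$.
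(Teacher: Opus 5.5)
The paper gives no proof of this proposition; it is quoted from \cite[Subsection~6.1]{PreArJu26}, so there is no internal argument to compare yours against. Your proof is correct and self-contained. The one slip is notational. With $x=as$ and $y=bt$ one gets $xey=a\,(s\cdot e)\,(s\cdot b)\,st$, so the middle factor is $s\cdot b$ and $u=st$; there is no $s'$ at that step. Nothing depends on this, since you only use that the factor lies in $\P_n$.

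For comparison, the usual route for a finite inverse monoid (and $T\sym_n$ is inverse, as the paper recalls) goes through two general facts:
\begin{enumerate}
\item $\J=\mathcal{D}$ in any finite semigroup;
\item two idempotents of an inverse monoid are $\mathcal{D}$-related iff they are $xx^*$ and $x^*x$ for some $x$.
\end{enumerate}
For $x=gs$ these idempotents are $g$ and $s^{-1}\cdot g$. Hence $e\,\mathcal{D}\,f$ iff $e$ and $f$ lie in one $\sym_n$-orbit, i.e.\ iff they have the same type. That route is shorter but relies on general semigroup theory.

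Your argument is elementary and also gives more. Your computation, together with the converse $e=e\,(tft^{-1})$ whenever $e$ is coarser than $t\cdot f$, shows that $e\in T\sym_n\,f\,T\sym_n$ iff $e$ is coarser than some conjugate of $f$. So the $\J$-order on the classes $J_\lambda$ is plain coarsening of integer partitions. This contrasts usefully with the order $\preceq$ that governs the rest of the paper, because coarsening is in general a strictly larger relation. For instance, for $n=5$ the partition $(3,2)$ is coarser than $(3,1,1)$, but $(3,2)\not\preceq(3,1,1)$ because $2<3$. This shows directly why $M\setminus\sym_n$, for a submonoid $M\supseteq\sym_n$, need not be an ideal of $T\sym_n$.
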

Note that the $\J$-class indexed by the trivial partition $(1,\ldots,1)$ corresponds precisely to $\sym_n$. See Figure~\ref{001}.
\begin{figure}[H]\figone\caption{The $\J$-classes of the tied-symmetric monoid $T\sym_3$.}\label{001}\end{figure}

\subsection{Proof of Theorem~\ref{008}}\label{013}

Note that by definition of the action, we have $\|s\cdot e\|=\|e\|$ for all $s\in\sym_n$. Consequently, $\sym_n$ acts transitively on the set of all set partitions of a given type.

\begin{lem}\label{004}
Every submonoid of $T\sym_n$ containing $\sym_n$ is a union of $\J$-classes of $T\sym_n$.
\end{lem}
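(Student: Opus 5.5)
The plan is to show directly that whenever $M$ contains an element $x$ of some $\J$-class $J_\lambda$, it contains all of $J_\lambda$. Since the $\J$-classes partition $T\sym_n$, this shows that $M$ is the union of the $\J$-classes it meets. The tools are Proposition~\ref{009} (or simply its proof), the description $J_\lambda=\{ses'\mid s,s'\in\sym_n\}$ from Proposition~\ref{003}, and the transitivity remark stated just before the lemma.

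The steps are as follows.
\begin{enumerate}
\item Take $x\in M$ and factor it uniquely as $x=es$ with $e\in\P_n$ and $s\in\sym_n$. Since $s^{-1}\in\sym_n\subseteq M$, we get $e=xs^{-1}\in M$.
\item Put $\lambda=\|e\|$. By Proposition~\ref{003}, computed with this fixed $e$, we have $x=e\,s\in J_\lambda$, and $J_\lambda=\{tet'\mid t,t'\in\sym_n\}$.
\item For arbitrary $t,t'\in\sym_n$, the element $tet'$ is a product of three elements of $M$, namely $t$, $e$ and $t'$, so it lies in $M$. Hence $J_\lambda\subseteq M$.
\end{enumerate}

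The only point needing care is step 2. Proposition~\ref{003} allows any fixed representative $e$ of type $\lambda$, and the transitivity of the $\sym_n$-action on set partitions of a given type shows the description does not depend on that choice. So we may take the idempotent $e$ obtained from $x$ itself.

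I do not expect a real obstacle here. The subtle point is only bookkeeping: we must recognise that $x$ itself belongs to the class $J_{\|e\|}$, which follows by taking $t=1$ and $t'=s$. Once that is done, the sets $J_\lambda$ for $\lambda$ ranging over the types of idempotents in $\P_n^M$ cover $M$ and are all contained in $M$. Therefore $M=\bigcup_{\lambda}J_\lambda$, with the union taken over $\lambda\in\{\|e\|\mid e\in\P_n^M\}$.
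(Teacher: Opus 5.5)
Your proof is correct and takes essentially the same route as the paper's. Both arguments use $\sym_n\subseteq M$ to strip the permutation factors off an element of $M$, obtaining a set partition $e\in M$ of type $\lambda$, and then invoke Proposition~\ref{003} to conclude $J_\lambda=\{tet'\mid t,t'\in\sym_n\}\subseteq M$; the only cosmetic difference is that you start from an element $x=es\in M$ rather than from a $\J$-class that meets $M$.
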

\begin{proof}
Let $M$ be a submonoid of $T\sym_n$ containing $\sym_n$. If a $\J$-class $J_\lambda$ of $T\sym_n$ intersects $M$, there exist $s,s'\in\sym_n$ and $e\in\P_n$ with $\|e\|=\lambda$ such that $ses'\in M$. Since $\sym_n\subseteq M$, it follows that $e\in M$, and hence $J_\lambda\subseteq M$.
\end{proof}

\begin{lem}\label{005}
Let $e,e'\in\P_n$ and let $\sym_n^e$ be the submonoid of $T\sym_n$ generated by $\sym_n\cup\{e\}$. Then:
\begin{enumerate}
\item If $\|e'\|\neq(1,\ldots,1)$, then $e'\in\sym_n^e$ if and only if $\|e'\|\preceq\|e\|$.
\item The monoid $\sym_n^e$ is the union of $\sym_n$ together with the $\J$-classes $J_\lambda$ with $\lambda\preceq\|e\|$.
\end{enumerate}
\end{lem}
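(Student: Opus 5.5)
The plan is to describe the elements of $\sym_n^e$ explicitly and then read off both parts. I would first show that
\[
\sym_n^e=\{(t_1\cdot e)(t_2\cdot e)\cdots(t_r\cdot e)\,s \mid r\ge 0,\ t_1,\ldots,t_r,s\in\sym_n\}.
\]
For the inclusion $\supseteq$, note that $t\cdot e=tet^{-1}\in\sym_n^e$ for every $t\in\sym_n$. For $\subseteq$, take an arbitrary word in $\sym_n\cup\{e\}$. Write it as $u_0eu_1e\cdots eu_r$ and push every permutation to the right using $ue=(u\cdot e)u$. This gives a product of conjugates $t_j\cdot e$ followed by a single permutation. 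The set on the right is closed under multiplication, again by this commutation rule, so the two sets agree.

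For part~1, suppose $e'\in\P_n$ with $\|e'\|\neq(1,\ldots,1)$ lies in $\sym_n^e$. By the uniqueness of the factorization $x=es$ in $T\sym_n$, the permutation part must be trivial. Hence $e'=d_1\cdots d_r$ with $d_j=t_j\cdot e$, and each $d_j$ has type $\|d_j\|=\|e\|$ because the action preserves type. Moreover $r\ge1$: if $r=0$, then $e'$ would be the identity, i.e.\ the partition into singletons, which has type $(1,\ldots,1)$. By the definition of $\preceq$, we get $\|e'\|\preceq\|e\|$.

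Conversely, suppose $\|e'\|\preceq\|e\|$. Then there are $d_1,\ldots,d_r$ of type $\|e\|$ whose product $d=d_1\cdots d_r$ has type $\|e'\|$. Since $\sym_n$ acts transitively on set partitions of a fixed type, each $d_j=t_j\cdot e$ for some $t_j$, so $d\in\sym_n^e$. Transitivity applied once more gives $e'=t\cdot d=tdt^{-1}\in\sym_n^e$.

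I expect the main subtlety to be this converse direction. The definition of $\preceq$ only asserts the existence of \emph{some} set partition of type $\|e'\|$ arising as such a product. Transitivity of the $\sym_n$-action on a fixed type, together with closure of $\sym_n^e$ under conjugation, is exactly what is needed to pass to the specific $e'$. A minor point is whether $r\ge1$ is implicit in the definition of $\preceq$. This only matters for the trivial type, which the hypothesis excludes, and $\sym_n$ is handled separately in part~2.

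For part~2, apply Lemma~\ref{004}: $\sym_n^e$ is a union of $\J$-classes, and $J_{(1,\ldots,1)}=\sym_n\subseteq\sym_n^e$. For any other $\lambda$, Proposition~\ref{003} gives $J_\lambda\subseteq\sym_n^e$ if and only if some $e'$ of type $\lambda$ lies in $\sym_n^e$. By part~1, this holds if and only if $\lambda\preceq\|e\|$. This yields the stated description of $\sym_n^e$.
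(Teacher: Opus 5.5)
Your proposal is correct and follows essentially the same route as the paper. You rewrite words in $\sym_n\cup\{e\}$ as a product of conjugates $t\cdot e$ followed by a permutation, use uniqueness of the factorization $x=es$ for the forward direction of part~1, place each $d_j$ and then $e'$ in $\sym_n^e$ for the converse, and combine part~1 with Lemma~\ref{004} for part~2. The only cosmetic difference is in that converse step: you invoke transitivity of the conjugation action on set partitions of a fixed type, while the paper uses the two-sided description $d_i=r_ier_i'$ of the $\J$-class indexed by $\|e\|$.
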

\begin{proof}
For the first part, consider $e'\in\sym_n^e$ with $\|e'\|\neq(1,\ldots,1)$. Since $\sym_n^e$ is generated by $\sym_n\cup\{e\}$, any element $x\in\sym_n^e\setminus\sym_n$ can be written as an alternating product of permutations $t_0,t_1,\ldots,t_k\in\sym_n$ and the idempotent $e$:\[x=t_0et_1e\cdots t_{k-1}et_k=d_1\cdots d_k t_0t_1\cdots t_k,\quad\text{where}\quad d_i=(t_0t_1\cdots t_{i-1})\cdot e\quad\text{for all}\quad i\in[k].\]If $x=e'$, then $t_0\cdots t_k=1$, which implies that $e'=d_1\cdots d_k$. Since $\|d_1\cdots d_k\|=\|e'\|$ and $\|d_i\|=\|(t_0\cdots t_{i-1})\cdot e\|=\|e\|$ for all $i\in[k]$, we conclude that $\|e'\|\preceq\|e\|$. Conversely, assume that $\|e'\|\preceq\|e\|$, which means there exist set partitions $d_1,\ldots,d_r\in\P_n$ such that $\|d_1\|=\dots=\|d_r\|=\|e\|$ and $\|d_1\cdots d_r\|=\|e'\|$. Proposition~\ref{003} ensures that each $d_i$ belongs to the $\J$-class indexed by $\|e\|$, implying the existence of permutations $r_i,r_i'\in\sym_n$ such that $d_i=r_ier_i'$. Since $\sym_n \subseteq\sym_n^e$ and $e\in\sym_n^e$, it follows that $d_1,\ldots,d_r\in\sym_n^e$, and since $\sym_n^e$ is a submonoid, their product $d=d_1\cdots d_r$ also belongs to $\sym_n^e$. Finally, since $\|d\|=\|e'\|$, both $d$ and $e'$ reside in the $\J$-class indexed by $\|e'\|$. Thus, there exist $r,r'\in\sym_n$ such that $e'=r d r'$, which allows us to conclude that $e'\in\sym_n^e$.

For the second part, by Lemma~\ref{004}, since $\sym_n^e$ is a submonoid containing $\sym_n$, it must be a union of $\J$-classes of $T\sym_n$. Recall that $\sym_n$ corresponds to the $\J$-class indexed by the trivial partition. For any other partition $\lambda\in\Par_n$, the class $J_\lambda$ is contained in $\sym_n^e$ if and only if it contains at least one set partition $e'$ of type $\lambda$. By the first part, this holds if and only if $\lambda=\|e'\|\preceq\|e\|$. Therefore, the monoid $\sym_n^e$ is the union of $\sym_n$ together with the $\J$-classes $J_\lambda$ satisfying $\lambda\preceq\|e\|$, as required.
\end{proof}

A partition $\lambda'$ is said to be \emph{coarser} than a partition $\lambda$ if the parts of $\lambda'$ can be obtained by adding together parts of $\lambda$. As shown in \cite[Theorem~3.4]{OrSaSchZa25},\begin{equation}\label{006}\lambda\preceq\lambda'\text{ if and only if $\lambda$ is coarser than $\lambda'$ and }\varsigma\geq\varsigma',\end{equation}where $\varsigma$ and $\varsigma'$ denote the smallest parts of $\lambda$ and $\lambda'$, respectively, that are not equal to $1$. See Figure~\ref{002}.

\begin{figure}[H]\figtwo\caption{Subposet $(\Par_n^*,\preceq)$ of non-trivial partitions of $4$.}\label{002}\end{figure}

\begin{proof}[{\bf Proof of Theorem~\ref{008}}]
Let $M_1$ and $M_2$ be two submonoids of $T\sym_n$ containing $\sym_n$. The identity element trivially belongs to $M_1\cup M_2$. To prove closure under multiplication, let $x,y\in M_1\cup M_2$. If both elements belong to the same submonoid $M_i$, their product $xy$ trivially belongs to $M_i\subseteq M_1\cup M_2$. Suppose then, without loss of generality, that $x\in M_1$ and $y\in M_2$. Let $s,s'\in\sym_n$ and $e,e'\in\P_n$ such that $x=es$ and $y=e's'$. Then $xy=ese's'=edss'$, where $d=s\cdot e'$. Because $\sym_n$ is contained in both submonoids, we have $e\in M_1$ and $e'\in M_2$, and so $\sym_n^e\subseteq M_1$ and $\sym_n^d\subseteq M_2$. If $\|e\|=(1,\ldots,1)$, then $e=1$, meaning $ed=d\in\sym_n^d$. Symmetrically, if $\|d\|=\|e'\|=(1,\ldots,1)$, then $d=1$, meaning $ed=e\in\sym_n^e$. Assume now that neither $\|e\|$ nor $\|d\|$ is the trivial partition. By the second part of Lemma~\ref{005}, $ed\in\sym_n^e\cup\sym_n^d$ if and only if $\|ed\|\preceq\|e\|$ or $\|ed\|\preceq\|d\|$. Since $ed$ is obtained by taking the supremum of $e$ and $d$, the partition $\|ed\|$ is coarser than both $\|e\|$ and $\|d\|$. Suppose for the sake of contradiction that $\|ed\|\not\preceq\|e\|$ and $\|ed\|\not\preceq\|d\|$. Since $\|ed\|$ is coarser than both, by~\eqref{006} it must be that $\varsigma<\varsigma_e$ and $\varsigma<\varsigma_d$, where $\varsigma,\varsigma_e,\varsigma_d$ are the smallest parts of the partitions $\|ed\|$, $\|e\|$ and $\|d\|$, respectively, that are not equal to $1$. By definition, $ed$ contains a block of size $\varsigma$. Since all non-singleton blocks of $e$ have sizes at least $\varsigma_e>\varsigma$, this block of size $\varsigma$ in $ed$ can only be formed by joining singleton blocks of $e$. In order to merge these singletons, they must be connected by a block in $d$. This implies that $d$ contains a block of size exactly $\varsigma$, which means that $\|d\|$ has a part equal to $\varsigma$. As a result, $\varsigma_d\leq\varsigma$, which contradicts our previous deduction that $\varsigma<\varsigma_d$. Thus, we must have $\|ed\|\preceq\|e\|$ or $\|ed\|\preceq\|d\|$, meaning that $ed\in\sym_n^e\cup\sym_n^d\subseteq M_1\cup M_2$. Finally, since $ss'\in\sym_n\subseteq M_1\cup M_2$, we conclude that $xy=(ed)(ss')\in M_1\cup M_2$.
\end{proof}

\begin{crl}
Let $M$ and $N$ be two submonoids of $T\sym_n$ containing $\sym_n$. Then $\P_n^M\cup\P_n^N$ is a submonoid of $\P_n$, and $M\cup N$ decomposes as a semidirect product $M\cup N=(\P_n^M \cup \P_n^N) \rtimes \sym_n$.
\end{crl}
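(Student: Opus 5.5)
The plan is to combine Theorem~\ref{008} with Proposition~\ref{009}. By Theorem~\ref{008}, $M\cup N$ is a submonoid of $T\sym_n$, and it contains $\sym_n$ because both $M$ and $N$ do. Proposition~\ref{009} therefore applies to $M\cup N$ and gives
\[
M\cup N=\P_n^{M\cup N}\rtimes\sym_n,\qquad \P_n^{M\cup N}=\P_n\cap(M\cup N).
\]

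Everything then reduces to the set identity
\[
\P_n\cap(M\cup N)=(\P_n\cap M)\cup(\P_n\cap N)=\P_n^M\cup\P_n^N,
\]
which is just distributivity of intersection over union. Substituting this into the decomposition above gives the claimed equality $M\cup N=(\P_n^M\cup\P_n^N)\rtimes\sym_n$.

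It remains to see that $\P_n^M\cup\P_n^N$ is a submonoid of $\P_n$. It is the intersection of two submonoids of $T\sym_n$, namely $\P_n$ and $M\cup N$, so it contains the identity and is closed under products. This can also be checked directly. For $e\in\P_n^M$ and $e'\in\P_n^N$, the product $ee'$ lies in $\P_n$ and, by Theorem~\ref{008}, in $M\cup N$, hence in $\P_n^M\cup\P_n^N$.

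There is no real obstacle here. The only care needed is to confirm that the hypotheses of Proposition~\ref{009} hold for $M\cup N$, which is exactly what Theorem~\ref{008} supplies. The $\sym_n$-invariance of $\P_n^M\cup\P_n^N$ needed for the semidirect product then comes out of that proposition, and can also be seen directly since each of $\P_n^M$ and $\P_n^N$ is invariant.
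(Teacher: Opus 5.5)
Your proposal is correct and follows essentially the same route as the paper: apply Theorem~\ref{008} to get that $M\cup N$ is a submonoid containing $\sym_n$, invoke Proposition~\ref{009}, and identify $\P_n^{M\cup N}=\P_n^M\cup\P_n^N$ by distributivity of intersection over union. Your additional remark that $\P_n^M\cup\P_n^N$ is a submonoid because it is the intersection of the submonoids $\P_n$ and $M\cup N$ of $T\sym_n$ is a valid, slightly more explicit justification than the paper's, which reads this off from the decomposition.
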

\begin{proof}
By Theorem~\ref{008}, the union $M\cup N$ is a submonoid of $T\sym_n$ containing $\sym_n$. By Proposition~\ref{009}, any such submonoid decomposes as $M\cup N=\P_n^{M \cup N}\rtimes\sym_n$. By definition, we have $\P_n^{M\cup N}=(M\cup N)\cap\P_n=(M\cap\P_n)\cup(N\cap\P_n)=\P_n^M\cup\P_n^N$. Therefore, $\P_n^M\cup\P_n^N$ is a submonoid of $\P_n$, and $M\cup N=(\P_n^M\cup\P_n^N)\rtimes\sym_n$.
\end{proof}

\begin{crl}\label{007}
\begin{enumerate}
\item Let $M$ be a submonoid of $T\sym_n$ containing $\sym_n$. Then the set of partitions $I=\{\|e\|\in\Par_n^*\mid e\in\P_n^M\}$ is a down-set of the subposet $(\Par_n^*,\preceq)$, and $M=\sym_n\cup\bigsqcup_{\lambda\in I}J_\lambda$.
\item Conversely, if $I$ is a down-set of $(\Par_n^*,\preceq)$, then $\sym_n\cup\bigsqcup_{\lambda\in I}J_\lambda$ is a submonoid of $T\sym_n$.
\end{enumerate}
\end{crl}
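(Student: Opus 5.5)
For part (1), I will first use Lemma~\ref{004} to write $M$ as a union of $\J$-classes of $T\sym_n$. It contains $J_{(1,\ldots,1)}=\sym_n$. By Proposition~\ref{003}, a class $J_\lambda$ lies in $M$ exactly when $M$ contains some set partition $e$ with $\|e\|=\lambda$: the class contains $e$ itself, and conversely $ses'\in M$ forces $e\in M$ since $\sym_n\subseteq M$. Hence $M=\sym_n\cup\bigsqcup_{\lambda\in I}J_\lambda$, and the union is disjoint because distinct $\J$-classes are disjoint. To see that $I$ is a down-set, take $\lambda\in I$ witnessed by $e\in\P_n^M$, and $\mu\in\Par_n^*$ with $\mu\preceq\lambda$. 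Since $M$ is a submonoid containing $\sym_n\cup\{e\}$, we have $\sym_n^e\subseteq M$. Lemma~\ref{005}(2) then gives $J_\mu\subseteq\sym_n^e\subseteq M$, so any set partition of type $\mu$ lies in $\P_n^M$, and therefore $\mu\in I$.

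For part (2), let $I$ be a down-set and set $M_I=\sym_n\cup\bigsqcup_{\lambda\in I}J_\lambda$. I will show that $M_I=\bigcup_{\lambda\in I}\sym_n^{e_\lambda}$, where $e_\lambda$ is any fixed set partition of type $\lambda$, with the convention that the empty union means $\sym_n$.
\begin{itemize}
\item By Lemma~\ref{005}(2), each $\sym_n^{e_\lambda}$ equals $\sym_n\cup\bigcup_{\mu\preceq\lambda}J_\mu$. Here $\mu$ ranges over partitions of $n$, and I take $\mu\in\Par_n^*$, since the trivial class is already $\sym_n$.
\item Because $I$ is a down-set, every such $\mu$ lies in $I$. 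Conversely, each $\lambda\in I$ satisfies $\lambda\preceq\lambda$ by reflexivity of the partial order. So the union of these monoids is exactly $M_I$.
\end{itemize}
Each $\sym_n^{e_\lambda}$ is a submonoid containing $\sym_n$. By Theorem~\ref{008}, applied inductively over the finitely many $\lambda\in I$, their union is again a submonoid, which proves part (2).

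The only delicate point is the bookkeeping around the trivial partition, since $\preceq$ is defined on $\Par_n$ but the down-set lives in $\Par_n^*$. I must make sure that $J_{(1,\ldots,1)}$ is always accounted for through $\sym_n$ and never through $I$. Everything else follows directly from Lemma~\ref{005} and Theorem~\ref{008}.
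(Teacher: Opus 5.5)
Your proof is correct and is essentially the paper's argument. Part (2) is the same: you write the set as $\bigcup_{\lambda\in I}\sym_n^{e_\lambda}$ using Lemma~\ref{005} and the down-set property, then apply Theorem~\ref{008}. In part (1) you only reorder the ingredients, getting the decomposition directly from Lemma~\ref{004} and Proposition~\ref{003} rather than from $M=\bigcup_{e\in\P_n^M}\sym_n^e$, and closing the down-set with Lemma~\ref{005}(2) instead of Lemma~\ref{005}(1); your empty-union convention also covers $I=\emptyset$, which the paper leaves implicit.
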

\begin{proof}
For the first part, suppose $\lambda\preceq\|e\|$ for some $e\in \P_n^M$ with $\|e\|\in\Par_n^*$. By the first part of Lemma~\ref{005}, for any $e'\in\P_n$ with $\|e'\|=\lambda$, we have $e'\in \sym_n^e$. Since $e\in\P_n^M$ and $\sym_n\subseteq M$, it follows that $\sym_n^e\subseteq M$. Thus, $e'\in M$, which implies $e'\in \P_n^M$ and therefore $\lambda=\|e'\|\in I$. 

Next, recall that every element in $M$ can be uniquely written as $se'$ for some $s\in\sym_n$ and $e'\in\P_n$. Since $s\in\sym_n\subseteq M$, we have $s^{-1}(se')=e'\in M$, meaning that $e'\in \P_n^M$. Thus, $M$ is generated by $\sym_n$ and $\P_n^M$, which allows us to write $M=\bigcup_{e\in\P_n^M}\sym_n^e$. Applying Lemma~\ref{005}, we obtain $M=\bigcup_{e\in\P_n^M}(\sym_n\cup\bigsqcup_{\lambda\preceq\|e\|}J_\lambda)=\sym_n\cup\bigsqcup_{\lambda\in I}J_\lambda$. 

For the second part, assume that $I$ is a down-set of $(\Par_n^*,\preceq)$. For each $\lambda \in I$, fix an idempotent $e_\lambda\in\P_n$ with $\|e_\lambda\|=\lambda$. Since $I$ is a down-set, it contains every partition $\lambda'\preceq\lambda$. By Lemma~\ref{005}, we have $\sym_n^{e_\lambda}\subseteq\sym_n\cup\bigsqcup_{\lambda'\in I} J_{\lambda'}$. Consequently, since $J_\lambda\subseteq\sym_n^{e_\lambda}$ for every $\lambda\in I$, we can express this union as $\bigcup_{\lambda\in I}\sym_n^{e_\lambda}$. Since this is a union of submonoids of $T\sym_n$ containing $\sym_n$, Theorem~\ref{007} ensures that $\sym_n\cup\bigsqcup_{\lambda\in I}J_\lambda$ is also a submonoid of $T\sym_n$.
\end{proof}

For instance, the submonoid of $T\sym_4$ containing $\sym_4$, associated to the down-set $I=\{(4),(3,1),(2,2)\}$ of $(\Par_4^*,\preceq)$, is given by $\sym_4\sqcup J_{(4)}\sqcup J_{(3,1)}\sqcup J_{(2,2)}$. See Figure~\ref{002}.

\subsection{Proof of Theorem~\ref{010}}\label{014}

An \emph{antichain} of a poset is a subset of pairwise non-comparable elements. Indeed, each down-set $I$ is uniquely determined by the antichain of its maximal elements $\max(I)$ \cite[Section~3.4]{St97}. Consequently, by Corollary~\ref{007}, every submonoid of $T\sym_n$ containing $\sym_n$ is uniquely determined by an antichain of $(\Par_n^*,\preceq)$. See Figure~\ref{003}.

\begin{figure}[H]\figthr\caption{The lattice of submonoids of $T\sym_4$ containing $\sym_4$, represented by their corresponding antichain.}\label{003}\end{figure}

\begin{proof}[{\bf Proof of Theorem~\ref{010}}]
Let $S=\{s_1,\ldots,s_{n-1}\}$ denote the standard set of simple transpositions generating $\sym_n$, let $H=\{h_1,\ldots,h_k\}$, and let $N$ be the submonoid of $T\sym_n$ generated by $S\cup H$. Since $S\subseteq N$, the submonoid $N$ contains $\sym_n$. For each $i\in[k]$, we have $\|h_i\|=\lambda_i\in\max(I)\subseteq I$. By definition, $h_i\in J_{\lambda_i}$, and Corollary~\ref{007} ensures that $J_{\lambda_i}\subseteq M$. It follows that $H\subseteq M$, and consequently, $N\subseteq M$. Conversely, from Proposition~\ref{009}, any element $x\in M$ can be uniquely factored as $x=es$ for some $e\in\P_n^M$ and $s\in \sym_n$. Since $\sym_n\subseteq N$, it suffices to prove that $\P_n^M\subseteq N$. Let $e\in\P_n^M$. If $e=1$, then $e\in\sym_n\subseteq N$. Assume now that $e\neq1$, which implies that $\|e\|=\lambda\in I$. Because $I$ is uniquely determined by its antichain of maximal elements, there is some index $i\in[k]$ such that $\lambda\preceq\lambda_i$. By definition, there exist idempotents $d_1,\ldots,d_r\in\P_n$ such that $\|d_1\cdots d_r\|=\lambda$ and $\|d_j\|=\lambda_i$ for all $j\in[r]$. Let $e'=d_1\cdots d_r$. Since $\|e'\|=\lambda=\|e\|$ and the symmetric group $\sym_n$ acts transitively on the set of all set partitions of a given type, there exists a permutation $t\in\sym_n$ such that $e=t\cdot e'=te't^{-1}$. Since $\sym_n \subseteq N$, we can distribute the conjugation over the product to obtain $e=(td_1t^{-1})\cdots(td_rt^{-1})$, where $\|td_jt^{-1}\|=\lambda_i$ for all $j\in[r]$. Similarly, since $\|h_i\|=\lambda_i$, for each $j\in[r]$ there exists a permutation $u_j\in\sym_n$ such that $td_jt^{-1}=u_j\cdot h_i=u_jh_iu_j^{-1}$. Because $u_j\in\sym_n\subseteq N$ and $h_i$ belongs to $N$ by construction, each $td_jt^{-1}$ lies in $N$. Consequently, their product $e=te't^{-1}$ belongs to $N$. Hence, $x=es\in N$. This proves that $M\subseteq N$, which establishes the equality $M=N$.
\end{proof}

\begin{crl}\label{011}
Let $I$ be a down-set of $(\Par_n^*,\preceq)$ with $\max(I)=\{\lambda_1,\ldots,\lambda_k\}$, and let $M$ be its corresponding submonoid of $T\sym_n$ containing $\sym_n$. For each $i\in[k]$, fix an idempotent $h_i\in\P_n$ such that $\|h_i\|=\lambda_i$. Then, the submonoid of idempotents $\P_n^M$ is generated by the set of all conjugates of the representatives $h_i$.
\end{crl}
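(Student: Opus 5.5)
Let $C=\{t h_i t^{-1}\mid t\in\sym_n,\ i\in[k]\}$ be the set of all conjugates of the representatives, and let $\langle C\rangle$ be the submonoid of $\P_n$ generated by $C$. Note that $t h_i t^{-1}=t\cdot h_i$ is again a set partition, so $C\subseteq\P_n$. The plan is to prove $\P_n^M=\langle C\rangle$ by double inclusion, reusing the argument in the proof of Theorem~\ref{010}.

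For $\langle C\rangle\subseteq\P_n^M$: by Theorem~\ref{010} (or directly by Corollary~\ref{007}), each $h_i\in M$. Since $\sym_n\subseteq M$, every conjugate $th_it^{-1}$ lies in $M$, and it lies in $\P_n$, so $C\subseteq\P_n^M$. The set $\P_n^M=\P_n\cap M$ is an intersection of submonoids and hence a submonoid. Therefore $\langle C\rangle\subseteq\P_n^M$.

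For $\P_n^M\subseteq\langle C\rangle$: take $e\in\P_n^M$. If $e=1$, it is the empty product. Otherwise $\|e\|=\lambda\in I$ by Corollary~\ref{007}. Since $I$ is determined by its maximal elements, there is some $i$ with $\lambda\preceq\lambda_i$. By definition of $\preceq$, there are $d_1,\ldots,d_r\in\P_n$ with $\|d_j\|=\lambda_i$ and $\|d_1\cdots d_r\|=\lambda$. Because $\sym_n$ acts transitively on set partitions of a fixed type, we can choose $t\in\sym_n$ with $e=t(d_1\cdots d_r)t^{-1}=\prod_j (td_jt^{-1})$. Each factor $td_jt^{-1}$ has type $\lambda_i$, so by transitivity again it equals $u_jh_iu_j^{-1}\in C$ for some $u_j\in\sym_n$. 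Hence $e\in\langle C\rangle$.

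The only point needing care is that the factorisation $e=\prod_j (td_jt^{-1})$ is a genuine product in $\P_n$, i.e.\ a supremum of set partitions, with no permutation part left over. This holds because conjugation by $t$ is a monoid automorphism of $T\sym_n$ preserving $\P_n$. The two inclusions then give $\P_n^M=\langle C\rangle$.
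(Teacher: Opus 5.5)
Your proposal is correct and follows essentially the same route as the paper: a double inclusion, with $\langle C\rangle\subseteq\P_n^M$ because each $h_i\in M$ and $\sym_n\subseteq M$, and the reverse inclusion obtained by reusing the factorisation $e=\prod_j(td_jt^{-1})=\prod_j u_jh_iu_j^{-1}$ from the proof of Theorem~\ref{010}. Your extra remarks make explicit details the paper leaves implicit: that $\P_n^M$ is a submonoid as an intersection of submonoids, that $e=1$ is the empty product, and that conjugation by $t$ preserves $\P_n$.
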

\begin{proof}
Let $T$ be the submonoid of $M$ generated by the conjugates $sh_is^{-1}$ with $i\in[k]$ and $s\in\sym_n$. Since $M$ contains $\sym_n$ and, as explained in the proof of Theorem~\ref{010}, each $h_i\in M$, every conjugate $sh_is^{-1}$ belongs to $M$. Furthermore, since $sh_is^{-1}\in\P_n$, we obtain $sh_is^{-1}\in\P_n^M$ for all $i\in[k]$ and $s\in\sym_n$. Hence, $T\subseteq\P_n^M$. Conversely, as shown in the second part of the proof of Theorem~\ref{010}, any non-identity $e\in\P_n^M$ can be written as a product of conjugates $u_jh_iu_j^{-1}$ with $u_j\in\sym_n$ and $i\in[k]$. Thus, every $e\in \P_n^M$ belongs to $T$, concluding that $\P_n^M=T$.
\end{proof}

For instance, the submonoid of $T\sym_4$ containing $\sym_4$ associated with the antichain $\{(3,1),(2,2)\}$ is generated by the simple transpositions $s_1,s_2,s_3$ together with the idempotents $h_1=(\{1,2,3\},\{4\})$ and $h_2=(\{1,2\},\{3,4\})$. Similarly, the submonoid $M$ of $T\sym_4$ containing $\sym_4$ associated with the antichain $\{(4)\}$ is generated by $s_1,s_2,s_3$ together with the idempotent $(\{1,2,3,4\})$. In this case, we have $\P_4^M=\{1, (\{1,2,3,4\})\}$ because the idempotent $(\{1,2,3,4\})$ is central in $T\sym_4$. See Figure~\ref{003}.

\begin{figure}[H]\figfou\caption{The elements of the submonoid of $T\sym_4$ containing $\sym_4$, associated to the antichain $\{(4)\}$.}\label{003}\end{figure}

\subsection{Concluding remarks}

A natural problem is to determine a presentation by generators and relations for the submonoids of $T\sym_n$ containing $\sym_n$. While the Coxeter relations and the trivial idempotency $h_i^2=h_i$ and commutativity relations $h_ih_j=h_jh_i$ are immediate, characterizing the complete set of merging relations remains an open problem for a general down-set $I$.

By Corollary~\ref{007}, the singular part of any such submonoid $M$, given by $M \setminus\sym_n=\bigsqcup_{\lambda\in I}J_\lambda$, is itself a subsemigroup of the universal singular ideal $T\sym_n\setminus\sym_n$ stratified by $\J$-classes. From a complementary perspective, recent work investigated this singular part, introducing a distinguished submonoid $\BR(\sym_n)$ closely related to $\sym_n$ whose non-identity elements lie entirely within $T\sym_n \setminus \sym_n$~\cite[Subsection~5.2]{ArEs26}. While the present paper characterizes the submonoids of $T\sym_n$ containing the group of units $\sym_n$, a natural and compelling parallel problem for future investigation is to classify the subsemigroups of the singular part $T\sym_n \setminus \sym_n$ that contain $\BR(\sym_n) \setminus \{1\}$.

\subsubsection*{Acknowledgements}
The author acknowledges the financial support of DIDULS/ULS, through the project PR2553853.

\bibliographystyle{plainurl}
\bibliography{../../../../Projects/LaTeX/bibtex.bib}

\begin{thebibliography}{10}

\bibitem{AiArJu23}
F.~Aicardi, D.~Arcis, and J.~Juyumaya.
\newblock Brauer and {J}ones tied monoids.
\newblock {\em J Pure Appl Algebra}, 227(1):107161, 1 2023.

\bibitem{AiArJu24}
F.~Aicardi, D.~Arcis, and J.~Juyumaya.
\newblock Ramified inverse and planar monoids.
\newblock {\em Mosc Math J}, 24(3):321--355, 9 2024.

\bibitem{AiJu00}
F.~Aicardi and J.~Juyumaya.
\newblock An algebra involving braids and ties.
\newblock ICTP Preprint IC/2000/179, 11 2000.

\bibitem{AiJu16b}
F.~Aicardi and J.~Juyumaya.
\newblock Markov trace on the algebra of braids and ties.
\newblock {\em Mosc Math J}, 16(3):397--431, 2016.

\bibitem{AiJu16}
F.~Aicardi and J.~Juyumaya.
\newblock Tied links.
\newblock {\em J Knot Theor Ramif}, 25(9):1641001, 2016.

\bibitem{AiJu21}
F.~Aicardi and J.~Juyumaya.
\newblock Tied links and invariants for singular links.
\newblock {\em Adv Math}, 381:107629, 4 2021.

\bibitem{ArEs26}
D.~Arcis and J.~Espinoza.
\newblock Tied--boxed algebras.
\newblock {\em J Algebra}, pages 112--159, 1 2026.

\bibitem{ArJu21}
D.~Arcis and J.~Juyumaya.
\newblock Tied monoids.
\newblock {\em Semigroup Forum}, 103(1--2):356--394, 10 2021.

\bibitem{PreArJu26}
D.~Arcis and J.~Juyumaya.
\newblock Party-{H}ecke algebras.
\newblock Preprint, 3 2026.
\newblock URL: \url{arxiv.org/abs/2603.19917}.

\bibitem{Moo896}
E.~Moore.
\newblock Concerning the abstract groups of order $k!$ and $\frac{1}{2}k!$
  holohedrically isomorphic with the symmetric and the alternating
  substitution--groups on $k$ letters.
\newblock {\em P Lond Math Soc}, 28(1):357--367, 11 1896.

\bibitem{OrSaSchZa22}
R.~Orellana, F.~Saliola, A.~Schilling, and M.~Zabrocki.
\newblock Plethysm and the algebra of uniform block permutations.
\newblock {\em Algebraic Combinatorics}, 5(5):1165--1203, 2022.

\bibitem{OrSaSchZa25}
R.~Orellana, F.~Saliola, A.~Schilling, and M.~Zabrocki.
\newblock The lattice of submonoids of the uniform block permutations
  containing the symmetric group.
\newblock {\em Semigroup Forum}, 110:405--421, 4 2025.

\bibitem{St97}
R.~Stanley.
\newblock {\em Enumerative Combinatorics, Volume 1}, volume~49 of {\em
  Cambridge Studies in Advanced Mathematics}.
\newblock Cambridge University Press, Cambridge, 1997.

\end{thebibliography}

\end{document}